\def\N{\mathbb N}
\def\Z{\mathbb Z}
\def\R{\mathbb R}
\def\A{\mathbb A}
\def\B{\mathbb B}
\def\T{\mathcal T}
\def\i{\overline{1}}
\def\b{\boldsymbol}
\def\p{1m\overline mn}
\def\m{\i\overline mm\overline n}
\def\int{\mathrm{int}}
\def\fr{\mathrm{fr}}
\newcommand{\ZMB}{\mathbb Z_{-\beta}}
\newcommand{\TMB}{T_{-\beta}}
\newcommand{\DMBL}{d_{-\beta}(\ell)}
\newcommand{\DMBRS}{d_{-\beta}^*(\ell+1)}
\newcommand{\rozvojm}[1]{\langle #1\rangle_{-\beta}}
\newcommand{\aprec}{\prec_{\mathrm{alt}}}
\newcommand{\apreceq}{\preceq_{\mathrm{alt}}}
\def\Finmb{\operatorname{Fin}(-\beta)}
\newtheorem{lem}{Lemma}
\newtheorem{thm}[lem]{Theorem}
\newtheorem{de}[lem]{Definition}
\newtheorem{ex}[lem]{Example}
\author{Tom\'a\v s V\'avra\\ Department of Mathematics FNSPE\\ Czech Technical University in Prague} 
\title{On the Finiteness property of negative cubic Pisot bases}
\begin{document}
\maketitle
\begin{abstract}
We study arithmetical aspects of Ito-Sadahiro number systems with negative base. We show that the bases $-\beta<-1$, where $\beta$ is zero of $x^3-mx^2-mx-m,\ m\in\mathbb N,$ possess the so-called finiteness property. For the Tribonacci base $-\gamma,$ zero of $x^3-x^2-x-1$, we present an effective algorithm for addition and subtraction. In particular, we present a finite state transducer performing these operations. As a consequence of the structure of the transducer, we determine the maximal number of fractional digits arising from addition or subtraction of two \hbox{$(-\gamma)$-integers.}
\end{abstract}
\section{Introduction}
In the study of numeration systems with non-integer base, after the Golden ratio,
probably the most attention was given to its first generalization, the so-called Tribonacci constant $\gamma$, the real root of $x^3-x^2-x-1$. Multiplication in systems corresponding to \hbox{$\gamma$-expansions} was studied Grabner et al. \cite{GR} and later Messaoudi \cite{MES}, providing bounds on the number of fractional digits of the product of two \hbox{$\gamma$-integers.} Similar results for addition were obtained in \cite{AmFrMaPe, BER}.

An important characteristics of a numeration system is the so-called finiteness property (F), namely that the set of finite expansions forms a ring. Property (F) for $\gamma$-expansions
follows from a general result by Frougny and Solomyak \cite{FRSO}. This has an interesting consequence for proving connectedness of certain discrete planes \cite{berthe}.

Numeration systems with negative non-integer base $-\beta<-1$ received a non-negligible
attention since the paper [8] of Ito and Sadahiro in 2009. Certain arithmetic aspects seem
to be analogous to those for positive base systems \cite{FRLAI11, MPV11}. However, the construction of concrete algorithms is much more complicated in the former case.
This might be due to the fact that while $\beta$-expansions are lexicographicaly greatest among all the representations of a given number, the $(-\beta)$-expansions are not extremal with respect to neither lexicographical nor alternate lexicographical ordering, which is more natural for negative base, see~\cite{DK}.

For example, in positive base systems, Property (F) has been proven for large classes of Pisot numbers \cite{AKI, FRSO, HOL}. On the other hand, so far the only numbers known to possess analogous Property (-F) are negative bases $-\beta$ where $\beta$ is a root of $x^2-mx+n$, $m-2\geq n\geq 1$, $m,n\in{\mathbb N}$, as shown in \cite{MPV11}.

The main results of this paper are proving Property (-F) for zeros $>1$ of $x^3-mx^2-mx-m,\ m\in\N$, and an explicit description of a finite state transducer performing addition and subtraction in negative Tribonacci base $-\gamma$. The existence of such a transducer was proved in [5] for any negative Pisot base. However, the proof is not constructive, and does not provide any simple procedure to construct such a transducer for a specific base. This result has implications for the possibility
of modelling certain related sets in the frame of the so called cut-and project scheme, see~\cite{HePe}.
We also give a precise bound on the number of fractional digits arising from addition and subtraction of $(-\gamma)$-integers.
\section{Preliminaries}
The \emph{$(-\beta)$-expansions} considered by Ito-Sadahiro in \cite{IS09} are 
an analogue of \hbox{\emph{$\beta$-expansions,}} introduced by R\'enyi in \cite{REN}, which use a
negative base. Instead of
defining the expansions of numbers from $[0,1)$, the unit
interval $[\ell,\ell+1)$ with $\ell=\frac{-\beta}{\beta+1}$ 
was chosen. For $-\beta<-1$, any $x\in[\ell,\ell+1)$ has a
unique expansion of the form $d_{-\beta}(x)=x_1x_2x_3\cdots$
defined by
\begin{displaymath}
x_i=\lfloor-\beta\TMB^{i-1}(x)-\ell\rfloor,\quad\text{
where }\quad\TMB(x)=-\beta x-\lfloor-\beta x-\ell\rfloor\,.
\end{displaymath}
For any
 $x\in[\ell,\ell+1)$ we obtain an
infinite word from $A^\N=\{0,1,\ldots,\lfloor\beta\rfloor\}^\N$.

Another analogous concept is the \emph{$(-\beta)$-admissibility}, which
characterizes all digit strings over $A$ that are 
$(-\beta)$-expansion of some number. The lexicographic condition,
similar to the one by Parry, was also proved in~\cite{IS09}. Ito
and Sadahiro proved that a digit string $x_1x_2x_3\cdots\in A^\N$
is \emph{$(-\beta)$-admissible} (or, if no confusion is possible, just
admissible) if and only if it fulfills the lexicographic condition
\begin{equation*}
\label{eq_admissibilita_is} \DMBL\apreceq
x_ix_{i+1}x_{i+2}\cdots\aprec\DMBRS=\lim_{y\to
\ell+1_-}d_{-\beta}(y)\ \ \text{  for all }i\geq 1\,.
\end{equation*}
Here, the limit is taken over the product topology on $A^\N$ and
 $\aprec$ stands for \emph{alternate lexicographic ordering} defined
as follows: 
\begin{displaymath}
u_1u_2\cdots\aprec v_1v_2\cdots\ \Leftrightarrow\
(-1)^k(u_k-v_k)<0,\quad k=\min\{n\in\N :\ u_n\neq v_n\}.
\end{displaymath}
In analogy with $\beta$-numeration, the alternate
ordering corresponds to the ordering
on reals in $[\ell,\ell+1)$, i.e. $x<y\ \Leftrightarrow\
d_{-\beta}(x)\aprec d_{-\beta}(y)$.

The reference digit strings $\DMBL$ and $\DMBRS$ play the same
role for $(-\beta)$-expansions as R\'enyi expansions of unity for
$\beta$-expansions. While $\DMBL$ is obtainable directly from the
definition, the following rule (proved in~\cite{IS09}) can be used
for determining $\DMBRS$:
\begin{displaymath}
\DMBRS=\begin{cases}
(0l_1\cdots l_{q-1}(l_q-1))^\omega & \text{if $\DMBL=(l_1l_2\cdots l_q)^\omega$ for $q$ odd,} \\
0\DMBL & \text{otherwise.}
\end{cases}
\end{displaymath}
\begin{ex}
When $\beta>1$ is zero of $x^3-mx^2-mx-m,\ m\in\N$, we have $\DMBL=m0m^\omega.$
The lexicographic condition is 
\begin{displaymath}
m0m^\omega\apreceq x_ix_{i+1}\cdots\aprec 0m0m^\omega.
\end{displaymath}
Therefore the sequences from $A^\N$ with substrings $m0m^{2k-1}n,\ n<m,\ k\in\N$, or $0m0m^\omega$, are not admissible. Conversely, any sequence not containing those substrings is admissible.
\end{ex}
We can now recall the definition of
$(-\beta)$-expansions for all reals.

\begin{de}\label{de_minusbetarozvoje}
Let $-\beta<-1$, $x\in\R$. Let $k\in\N$ be minimal such that
$\frac{x}{(-\beta)^k}\in(\ell,\ell+1)$ and
$d_{-\beta}\Big(\frac{x}{(-\beta)^k}\Big)=x_1x_2x_3\cdots$. Then
the $(-\beta)$-expansion of $x$ is defined as
\begin{displaymath}
\rozvojm{x}=\begin{cases}
x_1\cdots x_{k-1}x_k\bullet x_{k+1}x_{k+2}\cdots & \text{ if $k\geq 1$,}\\
0\bullet x_1x_2x_3\cdots & \text{ if $k=0$.}
\end{cases}
\end{displaymath}
\end{de}

Similarly as in a positive base numeration, the set of
\emph{$(-\beta)$-integers} can be defined using the notion of
$\rozvojm{x}$. Since the base is negative, we can represent
any real number without the need of a minus sign. 

\begin{de}\label{de_minusbetacela}
Let $\beta>1$. Then the sets of $(-\beta)$-integers and of numbers with finite \hbox{$(-\beta)$-expansions} are defined as
\begin{displaymath}
\ZMB=\{x\in\R : \rozvojm{x} = x_k\cdots x_1x_0\bullet
0^\omega\}\ =\ \bigcup_{i\geq 0}(-\beta)^i\TMB^{-i}(0)\,,
\end{displaymath}
\begin{displaymath}
\Finmb=\{x\in\R : \rozvojm{x} = x_k\cdots x_1x_0\bullet
x_{-1}\cdots x_{-n}0^\omega\}\ =\ \bigcup_{i\geq 0}(-\beta)^{ -i}\ZMB\,.
\end{displaymath}
\end{de}
We say that $\beta$ has \emph{Property $\operatorname{(-F)}$} if $\Finmb$ is a ring, in other words, if 
$\mathrm{Fin}(-\beta)=\Z[\beta].$

We will be interested the length of the \emph{fractional part} of an expansion. It means that if $\langle x\rangle_{-\beta} = x_k\cdots x_0\bullet x_{-1}\cdots x_{-n}0^\omega$ with $x_{-n}\neq 0,$
we denote the length of the fractional part of $x$ as $\fr(x)=n.$ We set $\fr(x)=+\infty$ if $x\notin \Finmb$. The suffix $0^\omega$ might be omitted.

It was shown in \cite{DMV} that the number system with negative base $-\beta$ possesses interesting
properties when $\beta>1$ is set to be the zero of
\begin{equation}\label{baze}
x^k-mx^{k-1}-\dots-mx-n,\ m\geq n\geq1\quad\text{and }\ m=n\text{ for } k \text{ even.}
\end{equation}
In particular, 
\begin{equation}\label{confl}
\text{if}\quad x=\sum_{i=-m}^k x_i(-\beta)^i,\ x_i\in A\quad \text{then}\quad\fr(x)\leq m.
\end{equation}
Let us recall that an analogous result for $\beta$-expansions comes from Ch.~Frougny~\cite{FR}.

We will also often work with \emph{$(-\beta)$-representations}. Note that a representation differs from the expansion by the fact that it does not have to be admissible or even over the same alphabet. In other words, the representation $x_kx_{k-1}\cdots x_0\bullet x_{-1}\cdots$ corresponds to the number $\sum_{i\leq k} x_i(-\beta)^i$ for $x_i\in \Z.$


\section{The arithmetics of $(-\gamma)$-expansions}
\begin{figure}\centering
\includegraphics[width=10cm]{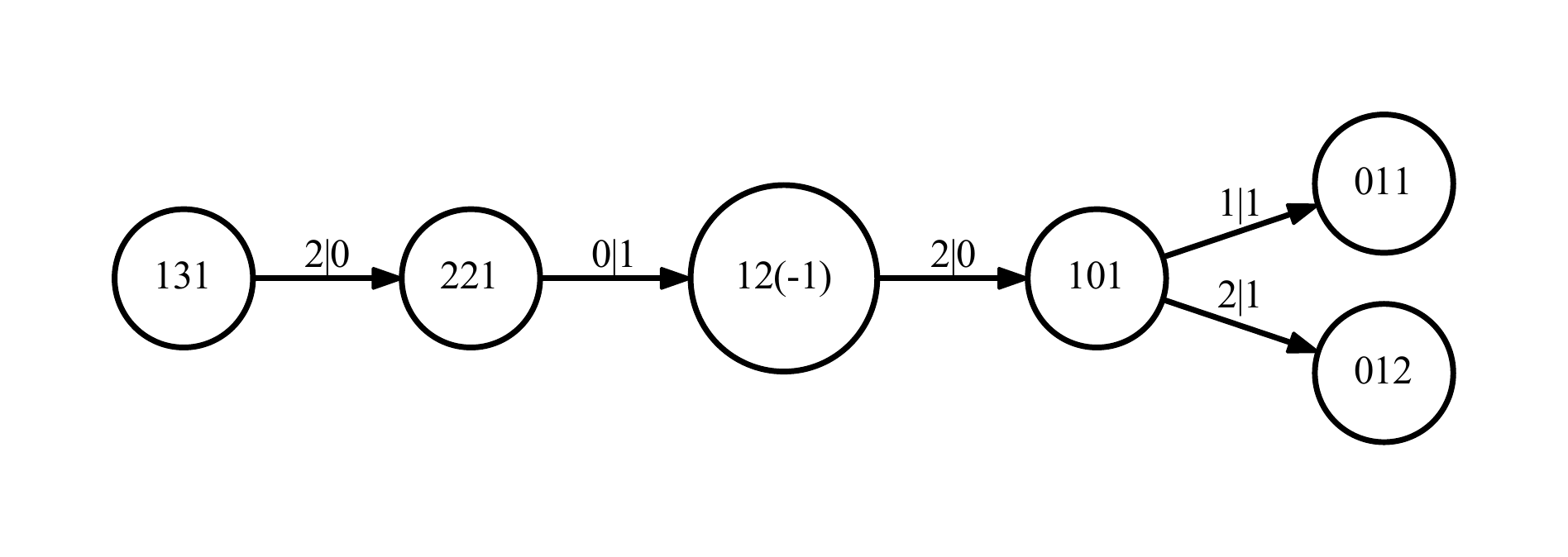}
\caption{Transitions leading to the state $101$ which cannot be escaped by reading zero}
\label{subgraph}
\end{figure}
In Section \ref{transducers}, transducers $\T_+$ and $\T_-$ are defined. The former acts on the set $\{0,1,2\}^\N$ and if the input is a digit-wise sum of two $(-\gamma)$-integers $x,y\in\Z_{-\gamma},$ then the output sequence is a representation of $x+y$ over $\{0,1\}.$ The output sequence is not necessarily an expansion. The Ito-Sadahiro $(-\beta)$-expansions are not a redundant numeration system, thus addition is not computable by a finite state transducer while respecting the admissibility condition. Also, $\T_+$ is not defined on the whole set $\{0,1,2\}^\N,$ namely, the substring $2020$ may not be recognized (see Figure~\ref{subgraph}). Nevertheless, reading $2020$ corresponds to the occurence of forbidden factors $1010$ in both $x$ and $y$. Thus, it is sufficient to assume that the string representing either $x$ or $y$ is admissible.

The transducer $\T_-$ transforms a $(-\gamma)$-representation over the alphabet $\{0,\i\}$ to the alphabet $\{0,1\}$. The symbol $\overline a$ stands for $-a.$ Then, given a representation of $x$ over $\{0,1\}$, the representation of $-x$ over $\{0,1\}$ can be computed as $\T_-(\overline x)$ where $\overline x$ stands for the string $x$ whose digits \lq\lq{}are reversed\rq\rq{}, i.e. an image under the morphism $0\mapsto 0, 1\mapsto \i.$

Even though the transducers act on sequences, the output sequence can be regarded as a representation. For, from the contruction it will be clear where the fractional point lies. We denote by  
$\fr_{\T_+}(x)$ and  $\fr_{\T_-}(x)$ the lengths of fractional parts of the output \hbox{$(-\gamma)$-representations,} i.e. if $\T_+(x)=\cdots\bullet z_{-1}\cdots z_{-n}0^\omega,$ $z_{-n}\neq 0,$ then $\fr_{\T_+}(x)=n.$
Note that $\fr(x)\leq\fr_{\T_+}(x),$ as follows from~\eqref{confl}.

Before we prove Theorem~\ref{teorem}, we present a technical lemma whose proof can be found in Section~\ref{transducers}.	

\begin{lem}\label{lemmatecko}
Let $z=z_k\cdots z_0\bullet0^\omega$, $z_i\in\{0,1,2\},$ be a digit-wise sum of two
\hbox{$(-\gamma)$-integers.}
  \begin{enumerate}
    \item We have $\fr(z)\leq 6.$
    \item If $z_0\neq2$, then $\fr_{\T_+}(z)\leq 5. $
    \item If $\fr_{\T_+}(z)\geq2,$ then $\T_+(z)$ has suffix $0010^\omega$.
    \item If $y\in\Z_{-\gamma}$, then $\fr_{\T_-}(\overline y)\leq 3.$
    \item Let $y\in\Z_{-\gamma}$ and $\fr_{\T_-}(\overline y)\geq2.$ Then $\T_-(\overline y)$ has suffix $0010^\omega$.
  \end{enumerate}
\end{lem}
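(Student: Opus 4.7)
The plan is to analyse the two transducers $\T_+$ and $\T_-$ constructed in Section~\ref{transducers} by enumerating their states and, for each state $s$, determining the longest output sequence produced when the remaining input from $s$ is the zero word $0^\omega$. Call this quantity the \emph{escape length} of $s$. The observation linking this analysis to the lemma is that the output of either transducer is a $(-\gamma)$-representation over the alphabet $\{0,1\}$; hence by~\eqref{confl} applied with $k=3,\ m=n=1$ we have $\fr(z)\leq \fr_{\T_+}(z)$, so bounding the escape length of every reachable state suffices to bound the true fractional length as well.

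For parts~1--3, I will list every state $s$ of $\T_+$ reachable from the initial state along a path whose input is a prefix of a digit-wise sum of two admissible $(-\gamma)$-integer strings, and tabulate the escape lengths. Part~1 reduces to checking that no such reachable state produces more than $6$ non-zero trailing output digits, giving $\fr(z)\leq\fr_{\T_+}(z)\leq 6$. For part~2, the extra hypothesis $z_0\neq 2$ rules out the transition entering the worst-case terminal state on the last input symbol, and the plan is to verify that every alternative terminal state has escape length at most $5$. For part~3, tracing backwards from each escape path of length $\geq 2$ one checks by direct inspection that the last three output symbols before the terminal zeros must be $001$.

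Parts~4 and~5 will be handled by the analogous enumeration for $\T_-$, whose input alphabet is $\{0,\overline 1\}$ and whose state space is considerably smaller than that of $\T_+$. The escape-length argument is identical in spirit and should yield the bound $3$ in part~4 and the forced suffix $001$ in part~5.

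The main obstacle will be showing that the extremal escape lengths quoted in the lemma are tight, and, more delicately, that certain high-escape-length states of $\T_+$ are actually \emph{unreachable} under the admissibility hypothesis on $x$ and $y$. As Figure~\ref{subgraph} already illustrates, states such as $101$ can only be entered from input factors such as $2020$ that do not arise from a digit-wise sum of two admissible strings; ruling such states out, and then carefully distinguishing which terminal states survive the additional restrictions in parts~2 and~3, will be the most laborious bookkeeping step.
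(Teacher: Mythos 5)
Your proposal is correct and follows essentially the same route as the paper: both arguments reduce the lemma to a finite inspection of the subgraph of $\T_+$ (resp.\ $\T_-$) induced by reading trailing zeros, tabulating the maximal output length from each reachable state (your ``escape length''), using \eqref{confl} to pass from $\fr_{\T_+}(z)$ to $\fr(z)$, and observing that the worst-case states are entered only on input digit $2$ (for item~2) or are unreachable under admissibility. The only remaining work in your plan is the explicit bookkeeping over the transition tables, which is exactly what the paper's proof also defers to its figures.
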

 
\begin{thm}\label{teorem}
  Let $x,y\in\Z_{-\gamma}.$ Then $\fr(x+y),\fr(-x)\leq 6.$
\end{thm}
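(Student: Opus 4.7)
The plan is to derive Theorem~\ref{teorem} as an almost immediate consequence of Lemma~\ref{lemmatecko}, handling the two bounds $\fr(x+y)\leq 6$ and $\fr(-x)\leq 6$ separately and in a few lines each.

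For the bound on $\fr(x+y)$, I will write the $(-\gamma)$-expansions of $x$ and $y$ in the form $x_k\cdots x_0\bullet 0^\omega$ and $y_k\cdots y_0\bullet 0^\omega$ with a common length $k+1$, padding the shorter one by leading zeros. Since every digit of either word lies in the canonical Tribonacci alphabet $A=\{0,1\}$, the digit-wise sum $z_i\definl x_i+y_i$ lies in $\{0,1,2\}$, and $z=z_k\cdots z_0\bullet 0^\omega$ is a $(-\gamma)$-representation of the real number $x+y$. Thus $\fr(x+y)=\fr(z)$, and the bound $\fr(z)\leq 6$ is precisely Lemma~\ref{lemmatecko}(1).

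For the bound on $\fr(-x)$, I will pass to the transducer $\T_-$. Let $\overline{x}$ denote the image of $\rozvojm{x}$ under the morphism $0\mapsto 0$, $1\mapsto \i$; as explained in the preamble to the theorem, $\overline{x}$ is a $(-\gamma)$-representation of $-x$, and $\T_-(\overline{x})$ is again a $(-\gamma)$-representation of $-x$, now over the alphabet $\{0,1\}=A$. By Lemma~\ref{lemmatecko}(4) the fractional part of this representation has length at most $3$, and since its digits lie in $A$, the general bound \eqref{confl} (with $\beta=\gamma$, $k=3$) forces $\fr(-x)\leq 3\leq 6$.

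The main obstacle, as the reader will have already guessed, lies entirely in Lemma~\ref{lemmatecko} itself rather than in the theorem: once parts (1) and (4) of the lemma are in hand, the theorem is a two-line unfolding of definitions. The interesting combinatorial work, deferred to Section~\ref{transducers}, is the transducer-state analysis needed to establish the lemma, in particular tracking the longest path from entry into a $\bullet$-crossing state to arrival in the terminal loop of $\T_+$ (which must be shown to have length $\leq 6$ even when the tricky digit $2$ appears at position $0$) and the analogous but shorter analysis for $\T_-$.
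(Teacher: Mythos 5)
Your proof is correct for the statement exactly as written, and its first half ($\fr(x+y)\le 6$ via Lemma~\ref{lemmatecko}, item 1) coincides with the paper's argument; for the second half you in fact obtain the sharper bound $\fr(-x)\le 3$, since $\T_-(\overline{x})$ represents $-x$ over $A=\{0,1\}$ with at most three fractional digits and \eqref{confl} applies.

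Be aware, however, of a mismatch between the literal statement and what the paper's proof actually establishes and needs. Almost the entire proof in the paper is devoted to showing $\fr(x-y)\le 6$ for arbitrary $x,y\in\Z_{-\gamma}$; the expression $\fr(-x)$ in the statement is evidently meant as the subtraction bound, since the paragraph following the theorem deduces $L_\oplus(-\gamma)\le 6$ and $L_\oplus$ covers subtraction. Your two-line treatment of negation does not yield that stronger fact: $x-y=x+\T_-(\overline{y})$ is the sum of a $(-\gamma)$-integer and an element of $\Finmb$ with up to three fractional digits, which is not a digit-wise sum of two $(-\gamma)$-integers, so Lemma~\ref{lemmatecko}, item 1, does not apply directly, and rescaling by $(-\gamma)^3$ before applying it would only give $\fr(x-y)\le 9$. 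The paper closes this gap with a case analysis on $\fr_{\T_-}(\overline{y})\in\{0,1,2,3\}$, using items 2, 3 and 5 of the lemma (the suffix $001$ of the transducer outputs and the bound $\fr_{\T_+}(z)\le 5$ when $z_0\ne 2$) together with explicit rewritings by representations of zero, to show that the fractional digits produced by $\T_+$ and the trailing digits of $\T_-(\overline{y})$ do not push nonzero digits past position $-6$. If the intended reading is the subtraction bound, your proposal has a genuine gap here; if only the literal reading is intended, your proof is fine and simpler, but then the subsequent claim $L_\oplus(-\gamma)=6$ would be left unsupported.
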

\begin{proof}
The fact that $\fr(x+y)\leq 6$ is a direct consequence of item 1 of Lemma~\ref{lemmatecko}.
Let us focus on $\fr(x-y).$ We distinguish the cases corresponding to the values $\fr_{\T_-}(\overline y)\in\{0,1,2,3\}.$

Obviously, we have $x-y=x+\T_-(\overline y),$ and if 
$\fr_{\T_-}(\overline y)=0$, then by Lemma~\ref{lemmatecko}, item 1, it holds $\fr(x-y)\leq6.$
  \begin{enumerate}
    \item If $\fr_{\T_-}(\overline y)=1,$ then
$x-y=x+\T_-(\overline y)=z_k\cdots z_0\bullet1,\ z_i\in\{0,1,2\}$. 
By Lemma~\ref{lemmatecko}, item 2, we have
$\fr_{\T_+}\left((-\beta)(x-y)\right)\leq 5$, thus $\fr(x-y)\leq6.$

    \item Let $\fr_{\T_-}(\overline y)=2.$ Then by item 3 of the lemma, $\T_-(\overline y)$ has suffix $001$ and we can write 
$$x-y=z_k\cdots z_0\bullet+0\bullet 01=z+0\bullet01,\ z_0\neq2.$$ 
We now apply $\T_+$ on $z$.
It holds $\fr_{\T_+}(z)\leq5$ and in case
$\fr_{\T_+}(z)\in\{0,1\}$ we directly obtain $\fr(x-y)\leq 2.$

If $\fr_{\T_+}(z)=2$, then $z_0=0$ and thus
$$x-y=z+0\bullet01=\T_+(z)+0\bullet 01=\cdots 0\bullet02=\cdots 1\bullet111,$$
i.e. $\fr(x-y)\leq 3.$ Here we used the representation of zero $0=11\i1=\i\i1\i$, that follows from the minimal polynomial for $\gamma$.

In case $\fr_{\T_+}(z)\in\{3,4\},$ we have by item 2 of the lemma that $\T_+(z)=\cdots\bullet 001$ or $\T_+(z)=\cdots\bullet z_{-1}001$, and therefore $\fr(x-y)\leq 4.$

If $\fr_{\T_+}(z)=5$
then either $x-y=\cdots\bullet z_{-1}1001,\ z_{-1}\in\{0,1\},$ or one of the following happens.
\begin{enumerate}
\item $x-y=\cdots\bullet 12001=\cdots\bullet0^\omega,$ or
\item $$x-y=\cdots0(01)^k\bullet 02001=
\begin{cases} 
  \cdots1\bullet1001&\text{for }k=0, \\
  \cdots110\bullet0&\text{for }k=1.\\
\end{cases}$$
Using the rewriting rules 
$$0=11(\i0\i)(101\i0\i)^m1\i=11(\i0\i)(101\i0\i)^m(101)\i1,\quad  m\geq0,$$
one can verify that $\fr(x-y)\leq 5$ also holds for all $k\geq2$.
\end{enumerate}
    \item If $\fr_{\T_-}(\overline y)=3,$ then the statement clearly holds for 
$\fr_{\T_+}(z)\in\{0,1,2,4,5\}.$
The cases $\fr_{\T_+}(z)\in\{3,6\}$ can be solved analogously to the previous case.
  \end{enumerate}
\end{proof}

The maximum value of fractional digits arising from arithmetical operations is sometimes denoted as $L_\oplus(-\beta)$ (addition and subtraction) and $L_\otimes(-\beta)$ (multi\-plication), cf.~\cite{MPV11}.
In fact, Theorem \ref{teorem} gives $L_\oplus(-\gamma)\leq 6.$ The inequality $L_\oplus(-\gamma)\geq 6$ is shown in  Example~\ref{priklad}. Altogether, we have $L_\oplus(-\gamma)=6.$
\section{Transducers $\T_+$ and $\T_-$}\label{transducers}
The aim of this section is to demonstrate Lemma~\ref{lemmatecko}, which was crucial in proving Theorem~\ref{teorem}. The main element for the demonstration are transducers $\T_+$ and $\T_-$ which are defined below.
\subsection{Transducer $\T_+$}
The idea of the construction of the adding automaton is the following: First, do a digit-wise sum of two numbers. Then, read this representation over the alphabet $\{0,1,2\}$ from left to right, and, using the rewriting rule $0=11\i1,$ rewrite it to the alphabet $\{0,1\}.$  We show that we can obtain a letter-to-letter transducer, i.e. a transducer that reads and outputs only one letter at a time. However, for simplicity, we present a transducer that is not letter-to-letter, but can be modified to such.

We define the tranducer $\T_+=(Q,\mathbb A,\mathbb B,E,I)$ where
\begin{itemize}
  \item $Q\subset \{\overline 2,\overline 1,0,1,2,3\}^3$ is the set of states;
  \item $I=000$ is the initial state;
  \item $\A=\{0,1,2\}$ is the input alphabet;
  \item $\B=\{0,1\}$ is the output alphabet;
  \item $E\subset\{(q,(u,v),q'):\ q,q'\in Q,\ (u,v)\in \A^*\times \B^*\}$ are the labeled edges defined in the list below.
\end{itemize}
We will denote the edge $(q,(u,v),q')$ as the transition $q|u\rightarrow v|q'.$ The transducer can be seen as a sliding window of length 4 (or 5 in some cases) that changes the string inside of itself and moves to the right while putting its left most symbol to the output.

One can verify that the transitions defined below do not change the numerical value of the input string since the right side is obtained by adding a representations of zero. That is, if $q|u\rightarrow v|q'$ then $vq'$ can be obtained from $qu$ by adding the strings $11\i1$ or $\i\i1\i$ (possibly two times). Moreover, with one exception, the transitions from any state are defined for any input letter from $A.$ The exception is the state $101$ that cannot be escaped by reading symbol $0.$ Nevertheless, the only path to the state $101$ leads from $131$ by reading the string $202$ on the input 
(see Figure~\ref{subgraph}). Reading $2020$ would mean that both $x$ and $y$ contain forbidden string $1010$. Therefore we assume that at least one summand does not contain $1010.$
\begin{ex}\label{priklad}
We will show how $\T_+$ acts on the number $212\bullet.$ First, we add the leading factor $000$ 
and the tailing factor $000000000$ to the representation. Below is the sliding window representation and on the right side are the corresponding transitions of $\T_+.$ Input and output symbols are in bold.

\setlength{\arraycolsep}{3.5pt}
\begin{displaymath}
\begin{array}{cccccc @{\ \ \bullet\ \ }cccccccccc|c}
[0&0&0&\b2]&1&2&0&0&0&0&0&0&0&0&0&0&000|2\rightarrow 0|002\\
\b0&[0&0&2&\b1]&2&0&0&0&0&0&0&0&0&0&0&002|1\rightarrow 1|112 \\
0&\b1&[1&1&2&\b2]&0&0&0&0&0&0&0&0&0&0& 112|2\rightarrow 1|122 \\
0&1&\b1&[1&2&2&\b0]&0&0&0&0&0&0&0&0&0&  122|0\rightarrow 1|220\\
0&1&1&\b1&[2&2&0&\b0]&0&0&0&0&0&0&0&0&  220|0\rightarrow 1|11\i\\
0&1&1&1&\b1&[1&1&\i&\b0]&0&0&0&0&0&0&0&  11\i|0\rightarrow0|00\i\\
0&1&1&1&1&\b0&[0&0&\i&\b0]&0&0&0&0&0&0& 00\i|0\rightarrow0|0\i0\\
0&1&1&1&1&0&\b0&[0&\i&0&\b0]&0&0&0&0&0& 0\i0|0\rightarrow1|0\i1\\
0&1&1&1&1&0&0&\b1&[0&\i&1&\b0]&0&0&0&0& 0\i1|0\rightarrow1|001\\
0&1&1&1&1&0&0&1&\b1&[0&0&1&\b0&\b0]&0&0& 001|00\rightarrow00|100\\
0&1&1&1&1&0&0&1&1&\b0&\b0&[1&0&0&\b0]&0& 100|0\rightarrow1|000\\
0&1&1&1&1&0&0&1&1&0&0&\b1&[0&0&0&\b0]& 000|0\rightarrow0|000\\
\end{array}
\end{displaymath}
Hence it holds that $212\bullet = 11110\bullet011001.$ The latter representation is admissible from which it follows that $\fr(2\gamma^2-\gamma+2)=6.$
\end{ex}
The transducer $\T_+$ is defined by the following list of transitions:

{
\catcode`\"=13
\catcode`\(=13
\def(-#1){\overline #1}
\def"#1"#2-> "#3"#4"#5|#6"];{\noindent$\phantom{333|33}\mathllap{#1 | #5} \rightarrow #6 | #3$\par}
\begin{centering}
\begin{multicols}{3}
"000":n -> "000":s [ penwidth=2, label ="0|0"];
"000" -> "001" [ penwidth=2, label ="1|0"];
"000" -> "002" [ penwidth=2, label ="2|0"];
"001" -> "100" [ penwidth=2, label ="00|00"];
"001" -> "011" [ penwidth=2, label ="01|11"];
"001" -> "012" [ penwidth=2, label ="02|11"];
"001" -> "011" [ penwidth=2, label ="1|0"];
"001" -> "012" [ penwidth=2, label ="2|0"];
"002" -> "111" [ penwidth=2, label ="0|1"];
"002" -> "112" [ penwidth=2, label ="1|1"];
"002" -> "131" [ penwidth=2, label ="21|11"];
"002" -> "132" [ penwidth=2, label ="22|11"];
"002" -> "220" [ penwidth=2, label ="20|00"];
"003" -> "121" [ penwidth=2, label ="0|1"];
"003" -> "122" [ penwidth=2, label ="1|1"];
"003" -> "123" [ penwidth=2, label ="2|1"];
"00(-1)" -> "0(-1)0" [ penwidth=2, label ="0|0"];
"00(-1)" -> "0(-1)1" [ penwidth=2, label ="1|0"];
"00(-1)" -> "0(-1)2" [ penwidth=2, label ="2|0"];
"011" -> "110" [ penwidth=2, label ="0|0"];
"011" -> "111" [ penwidth=2, label ="1|0"];
"011" -> "112" [ penwidth=2, label ="2|0"];
"012" -> "120" [ penwidth=2, label ="0|0"];
"012" -> "121" [ penwidth=2, label ="1|0"];
"012" -> "122" [ penwidth=2, label ="2|0"];
"013" -> "21(-1)" [ penwidth=2, label ="00|00"];
"013" -> "120" [ penwidth=2, label ="01|11"];
"013" -> "121" [ penwidth=2, label ="02|11"];
"013" -> "131" [ penwidth=2, label ="1|0"];
"013" -> "132" [ penwidth=2, label ="2|0"];
"023" -> "230" [ penwidth=2, label ="0|0"];
"023" -> "231" [ penwidth=2, label ="1|0"];
"023" -> "232" [ penwidth=2, label ="2|0"];
"0(-1)0" -> "0(-1)1" [ penwidth=2, label ="0|1"];
"0(-1)0" -> "0(-1)2" [ penwidth=2, label ="1|1"];
"0(-1)0" -> "111" [ penwidth=2, label ="20|00"];
"0(-1)0" -> "112" [ penwidth=2, label ="21|00"];
"0(-1)0" -> "023" [ penwidth=2, label ="22|11"];
"0(-1)1" -> "001" [ penwidth=2, label ="0|1"];
"0(-1)1" -> "002" [ penwidth=2, label ="1|1"];
"0(-1)1" -> "003" [ penwidth=2, label ="2|1"];
"0(-1)2" -> "011" [ penwidth=2, label ="0|1"];
"0(-1)2" -> "012" [ penwidth=2, label ="1|1"];
"0(-1)2" -> "013" [ penwidth=2, label ="2|1"];
"0(-1)(-1)" -> "0(-1)1" [ penwidth=2, label ="00|00"];
"0(-1)(-1)" -> "0(-1)2" [ penwidth=2, label ="01|00"];
"0(-1)(-1)" -> "(-1)03" [ penwidth=2, label ="02|11"];
"0(-1)(-1)" -> "(-1)(-1)1" [ penwidth=2, label ="1|0"];
"0(-1)(-1)" -> "(-1)(-1)2" [ penwidth=2, label ="2|0"];
"100" -> "000" [ penwidth=2, label ="0|1"];
"100" -> "001" [ penwidth=2, label ="1|1"];
"100" -> "002" [ penwidth=2, label ="2|1"];
"101" -> "011" [ penwidth=2, label ="1|1"];
"101" -> "012" [ penwidth=2, label ="2|1"];
"10(-1)" -> "0(-1)0" [ penwidth=2, label ="0|1"];
"10(-1)" -> "0(-1)1" [ penwidth=2, label ="1|1"];
"10(-1)" -> "0(-1)2" [ penwidth=2, label ="2|1"];
"110" -> "100" [ penwidth=2, label ="0|1"];
"110" -> "100" [ penwidth=2, label ="10|00"];
"110" -> "011" [ penwidth=2, label ="11|11"];
"110" -> "012" [ penwidth=2, label ="12|11"];
"110" -> "011" [ penwidth=2, label ="2|0"];
"111" -> "110" [ penwidth=2, label ="0|1"];
"111":n -> "111":s [ penwidth=2, label ="1|1"];
"111" -> "112" [ penwidth=2, label ="2|1"];
"112" -> "120" [ penwidth=2, label ="0|1"];
"112" -> "121" [ penwidth=2, label ="1|1"];
"112" -> "122" [ penwidth=2, label ="2|1"];
"11(-1)" -> "00(-1)" [ penwidth=2, label ="0|0"];
"11(-1)" -> "000" [ penwidth=2, label ="1|0"];
"11(-1)" -> "001" [ penwidth=2, label ="2|0"];
"11(-2)" -> "0(-1)(-1)" [ penwidth=2, label ="0|0"];
"11(-2)" -> "0(-1)0" [ penwidth=2, label ="1|0"];
"11(-2)" -> "0(-1)1" [ penwidth=2, label ="2|0"];
"120" -> "11(-1)" [ penwidth=2, label ="0|0"];
"120" -> "110" [ penwidth=2, label ="1|0"];
"120" -> "111" [ penwidth=2, label ="2|0"];
"121" -> "10(-1)" [ penwidth=2, label ="00|00"];
"121" -> "100" [ penwidth=2, label ="01|00"];
"121" -> "011" [ penwidth=2, label ="02|11"];
"121" -> "120" [ penwidth=2, label ="1|0"];
"121":n -> "121":s [ penwidth=2, label ="2|0"];
"122" -> "220" [ penwidth=2, label ="0|1"];
"122" -> "21(-1)" [ penwidth=2, label ="10|00"];
"122" -> "120" [ penwidth=2, label ="11|11"];
"122" -> "121" [ penwidth=2, label ="12|11"];
"122" -> "131" [ penwidth=2, label ="2|0"];
"123" -> "230" [ penwidth=2, label ="0|1"];
"123" -> "231" [ penwidth=2, label ="1|1"];
"123" -> "232" [ penwidth=2, label ="2|1"];
"12(-1)" -> "10(-1)" [ penwidth=2, label ="0|0"];
"12(-1)" -> "100" [ penwidth=2, label ="1|0"];
"12(-1)" -> "101" [ penwidth=2, label ="2|0"];
"131" -> "22(-1)" [ penwidth=2, label ="0|0"];
"131" -> "220" [ penwidth=2, label ="1|0"];
"131" -> "221" [ penwidth=2, label ="2|0"];
"132" -> "23(-1)" [ penwidth=2, label ="0|0"];
"132" -> "230" [ penwidth=2, label ="1|0"];
"132" -> "231" [ penwidth=2, label ="2|0"];
"21(-1)" -> "00(-1)" [ penwidth=2, label ="0|1"];
"21(-1)" -> "000" [ penwidth=2, label ="1|1"];
"21(-1)" -> "001" [ penwidth=2, label ="2|1"];
"220" -> "11(-1)" [ penwidth=2, label ="0|1"];
"220" -> "110" [ penwidth=2, label ="1|1"];
"220" -> "111" [ penwidth=2, label ="2|1"];
"221" -> "12(-1)" [ penwidth=2, label ="0|1"];
"221" -> "120" [ penwidth=2, label ="1|1"];
"221" -> "121" [ penwidth=2, label ="2|1"];
"22(-1)" -> "10(-1)" [ penwidth=2, label ="0|1"];
"22(-1)" -> "100" [ penwidth=2, label ="1|1"];
"22(-1)" -> "100" [ penwidth=2, label ="20|00"];
"22(-1)" -> "011" [ penwidth=2, label ="21|11"];
"22(-1)" -> "012" [ penwidth=2, label ="22|11"];
"230" -> "21(-1)" [ penwidth=2, label ="0|1"];
"230" -> "10(-1)" [ penwidth=2, label ="10|00"];
"230" -> "100" [ penwidth=2, label ="11|00"];
"230" -> "011" [ penwidth=2, label ="12|11"];
"230" -> "120" [ penwidth=2, label ="2|0"];
"231" -> "22(-1)" [ penwidth=2, label ="0|1"];
"231" -> "220" [ penwidth=2, label ="1|1"];
"231" -> "21(-1)" [ penwidth=2, label ="20|00"];
"231" -> "120" [ penwidth=2, label ="21|11"];
"231" -> "121" [ penwidth=2, label ="22|11"];
"232" -> "23(-1)" [ penwidth=2, label ="0|1"];
"232" -> "230" [ penwidth=2, label ="1|1"];
"232" -> "231" [ penwidth=2, label ="2|1"];
"23(-1)" -> "11(-2)" [ penwidth=2, label ="0|0"];
"23(-1)" -> "11(-1)" [ penwidth=2, label ="1|0"];
"23(-1)" -> "110" [ penwidth=2, label ="2|0"];
"(-1)03" -> "121" [ penwidth=2, label ="0|0"];
"(-1)03" -> "122" [ penwidth=2, label ="1|0"];
"(-1)03" -> "123" [ penwidth=2, label ="2|0"];
"(-1)(-1)1" -> "001" [ penwidth=2, label ="0|0"];
"(-1)(-1)1" -> "002" [ penwidth=2, label ="1|0"];
"(-1)(-1)1" -> "003" [ penwidth=2, label ="2|0"];
"(-1)(-1)2" -> "011" [ penwidth=2, label ="0|0"];
"(-1)(-1)2" -> "012" [ penwidth=2, label ="1|0"];
"(-1)(-1)2" -> "013" [ penwidth=2, label ="2|0"];

\end{multicols}
\end{centering}
}
\begin{figure}[h!]\centering
\includegraphics[width=\linewidth]{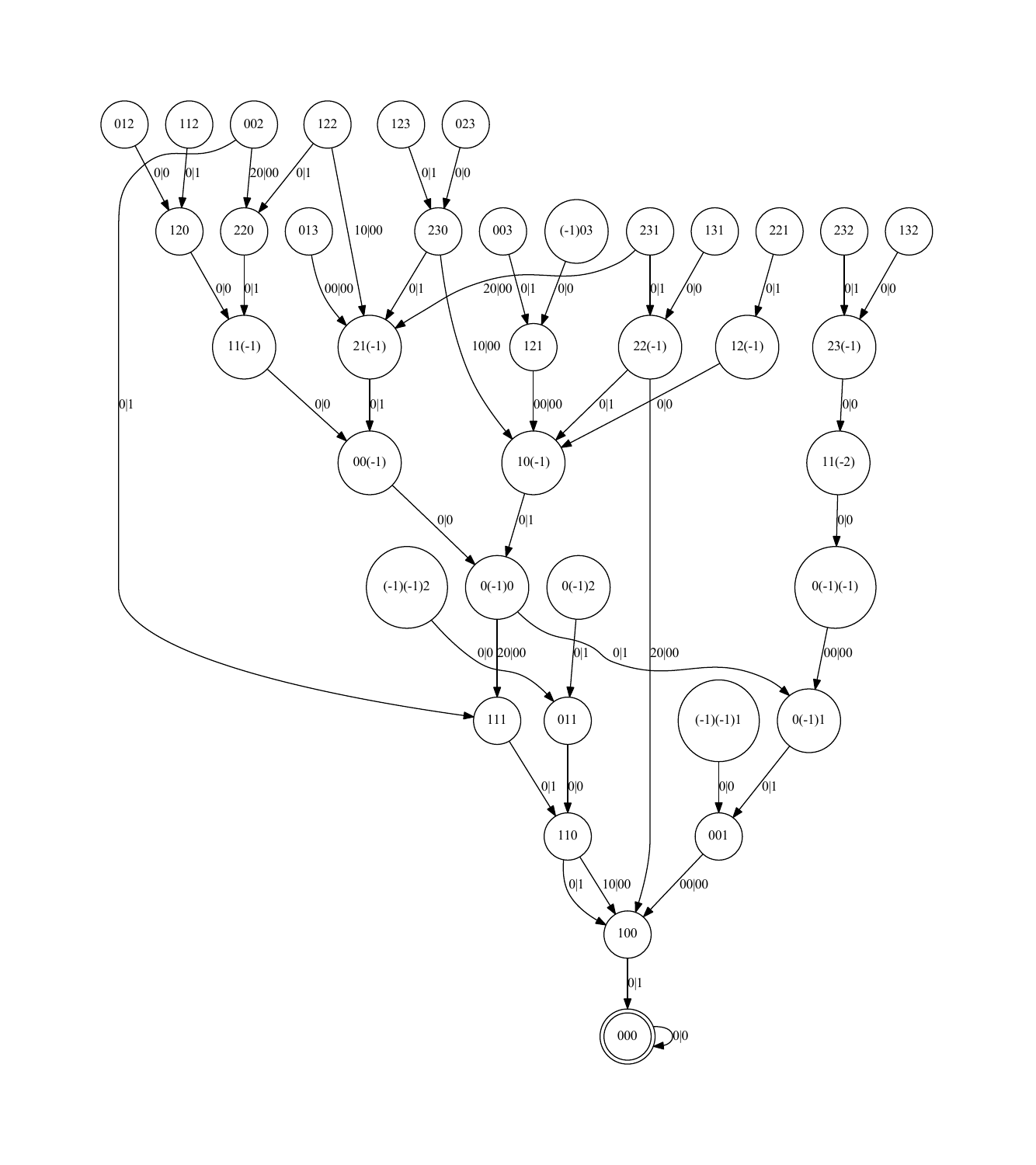}
\caption{Subgraph of $\T_+$ induced by reading zeros at the input}
\label{tree}
\end{figure}
\subsection{Transducer $\T_-$}
Now we define the letter-to-letter transducer $\T_-=(Q,\A,\B,E,I)$, where
\begin{itemize}
  \item $Q\subset \{\overline 2, \overline 1, 0, 1\}^3$ is the set of states;
  \item $I=000$ is the initial state;
  \item $\A=\{0,\overline 1\}$ is the input alphabet;
  \item $\B=\{0,1\}$ is the output alphabet;
  \item The set of transitions is defined below
\end{itemize}
\begin{centering}
\begin{multicols}{3}
$000|0 \rightarrow 0|000$\\
$000|\i \rightarrow 0|00\i$\\
$00\i|0 \rightarrow 0|0\i0$\\
$00\i|\i \rightarrow 0|0\i\i$\\
$0\i\i|0 \rightarrow 0|\i\i0$\\
$0\i\i|\i \rightarrow 1|0\overline 20$\\
$0\overline 20|0 \rightarrow 1|\i\i1$\\
$0\overline 20|\i \rightarrow 1|\i\i0$\\
$\i\i1|0 \rightarrow 0|001$\\
$\i\i1|\i \rightarrow 0|000$\\
$\i\i0|0 \rightarrow 0|0\i1$\\
$\i\i0|\i \rightarrow 0|0\i0$\\
$0\i0|0 \rightarrow 1|0\i1$\\
$0\i0|\i \rightarrow 1|0\i0$\\
$0\i1|0 \rightarrow 1|001$\\
$0\i1|\i \rightarrow 1|000$\\
$001|0 \rightarrow 0|010$\\
$001|\i \rightarrow 1|100$\\
$100|0 \rightarrow 1|000$\\
$100|\i \rightarrow 1|00\i$\\
$010|0 \rightarrow 0|100$\\
$010|\i \rightarrow 0|10\i$\\
$10\i|0 \rightarrow 1|0\i0$\\
$10\i|\i \rightarrow 1|0\i\i$
\end{multicols}
\end{centering}
\subsection{Proof of Lemma \ref{lemmatecko}}
\begin{proof}
We will provide demonstration for items 1-5 of the lemma.
\begin{enumerate}
\item Let us assume that the transducer is in the state $q$ and the string $0^\omega$ is on the input. This corresponds to the situation when we start reading the fractional digits. Only the paths starting in
the states
$012,112,122,123,023,003,\i03,$ $232,132$ result in six fractional digits (see Figure~\ref{tree}). Note that in this case, the last nonzero digit is $9$th on the output since first three output digits belong to the integer part of the output.
\item The states listed in the proof of item 1 are accessible only by reading the digit $2$.
\item From Figure~\ref{tree}, one can see that if $0010^\omega$ should not be suffix of $\T_+(x),$ then the path to the state $000$ must go through $110\xrightarrow{0|1}100$. If the path starts in $002, \i\i2,0\i2$, only one fractional digit is produced. The same holds if the path goes through $0\i0$, since we must read $20$ in the transition $0\i0\xrightarrow{20|00}111.$ This case corresponds to the rewriting $0\i02\bullet \rightarrow 001\bullet 1.$
We also obtain at most one fractional digit.
\item Consider transducer $\T_-$ and similarly as in the proof of item 1, inspect the paths leading to $000.$ Precisely, it takes at most six zeros on the input to reach the state $000$.
\item By the same argumentation as in the proof of item 3, one can see that $\T_-(\overline y)$ has the suffix $0010^\omega$ if $\fr_{\T_-}(\overline y)\geq2$.
\end{enumerate}
\end{proof}
\section{Property (-F) of cubic confluent Parry numbers}\label{sekcepet}
Let us study Property (-F) of the zeros of $x^3-mx^2-mx-n,\ m,n\in\N,\ m\geq n\geq1.$ Such numbers  belong to the class~\eqref{baze}, sometimes called confluent Parry numbers. Let us recall that in this case $\lfloor\beta\rfloor=m$ and $\sum_{i\in\Z} x_i(-\beta)^i\in\Finmb$ for any sequence
$(x_i)_{i\in\Z}\in \{0,1,\dots,m\}$ with finitely many nonzero elements. It was shown in~\cite{DMV} that the zeros of $x^3-mx^2-mx-n,\ m>n\geq1$ do not possess Property (-F), hence we focus only on the case $m=n.$ Property (-F) for $\gamma$ already follows from Section~\ref{transducers}. For if $0^\omega$ is a suffix of the input sequence, the transducer reaches the state $000$ where it has a cycle $000|0\rightarrow0|000.$

The closeness of $\Finmb$ under multiplication follows trivially from the closeness under addition. For any $x,y\in\Finmb$, we can write $x+y=x+y_{k_1}(-\beta)^{k_1}+\dots +y_{k_r}(-\beta)^{k_r},$ where $r$ is the number of nonzero digits in a representation of $y$. Since it holds $(-\beta)^d\Finmb=\Finmb$ for all $d\in\Z,$ it suffices to show that $-1,x+1\in\Finmb$ 
for any $x\in\Finmb.$ Note that $-1\bullet=1m\bullet00m\in\Finmb$, and $x+1\in\Finmb$ trivially if
$x=x_k\cdots x_0\bullet x_{-1}\cdots,\ x_0\neq m.$ Hence we only need to show that 
$x+1=x_k\cdots x_1 (m+1)\bullet x_{-1}\cdots$ has a finite representation.

In order to show that $x+1\in\Finmb$, we again construct an automaton rewriting a representation of $x+1$ to the alphabet $A=\{0,1,\dots,m\}.$ The following reasoning is helpful for an easy construction. We make use of the principle of the ``sliding window'' moving from left to right, see Example~\ref{priklad}. In each step, a representation of zero is added. Let us recall that it follows from the minimal polynomial of $\beta$, that the digitwise addition of strings 
$1m\overline mm=\i\overline m m\overline m$ does not change the value
of a representation.  Moreover, we do not consider output digits as long as they belong to $A$. The initial setting of the window will be such that it contains the digit $(m+1)$. Obviously, the automaton can stop anytime when the digits in the window belong to $A$.

The states of the automaton are divided into several sets $Q_i$ listed below, not necessarily disjoint. In each set of states, we consider all the choices $x,y,z\in\{0,1,\dots, m\}$ that satisfy additional conditions. These conditions are of two types. \underline{Underlined} conditions may not be satisfied, however, if they are not fulfilled, we just found a finite representation over $A$ and the automaton stops. The other conditions are either assumptions of the initial configuration, or inherited from the previous states.
We also assume that the input digit is any member of $A.$ For a given representation of
$x+1=x_k\cdots x_1 (m+1)\bullet x_{-1}\cdots,$ we have the following initial states:
\begin{enumerate}
  \item If $x_1\geq 1$, the initial state lies in $Q_\mathrm I$.
  \item If $x_2<m, x_1=0$, the initial state lies in $Q_1$.
  \item If $x_4x_3x_2x_1x_0=x_4x_3m0(m+1),$ then it necessarily holds $x_4x_3\neq m0$, otherwise $x$ is a non-admissible representation of $x$. In case $x_3\geq 1,$ we have $x_3m0(m+1)=(x_3-1)0m1$ and we found a finite representation of $x+1.$ In other case we have $x_3=0, x_4<m$ and we rewrite
$x_40m0(m+1)=(x_4+1)m0m(m+1)$ which corresponds to a state in $Q_\mathrm I.$
\end{enumerate}

\begin{ex}
Let $\beta>1$ be the zero of $x^3-2x^2-2x-2.$ Consider the representation $x+1=10203\bullet 10022.$ First, according to item 3 above, we rewrite $10203\bullet 10022=22023\bullet 10022$, hence we start in the state 
$(2,3,1)\in Q_\mathrm I.$
\setlength{\arraycolsep}{3.5pt}
\begin{displaymath}
\begin{array}{ccccc@{\ \ \bullet\ \ }ccccccc|l}
2&2&0&[2&3&1&\b 0]&0&1&2&0&0&Q_\mathrm I\xrightarrow\m Q_\mathrm{II}\subseteq Q_4\\
2&2&0&1&[1&3&\overline 2&\b0]&1&2&0&0&Q_4\xrightarrow{\m} Q_5\\
2&2&0&1&0&[1& 2&{\overline 2}&\b1]&2&0&0&Q_5\xrightarrow{\m} Q_7\\
2&2&0&1&0&0& [0&0&\i&\b2]&0&0&Q_7\xrightarrow{0000} Q_8\\
2&2&0&1&0&0& 0&[0&\i&2&\b0]&0&Q_8\xrightarrow{\p} Q_9\\
2&2&0&1&0&0& 0&1&[1&0&2&\b0]&Q_9: (y=m\wedge z=0)\Rightarrow\mathrm{\bf{STOP}}\\
\end{array}
\end{displaymath}
Note that $y$ and $z$ are not the values in the window, they are the values in the original representation.
\end{ex}
Now we define the transducer by the set of states $Q_i$ together with the transitions and corresponding rewriting rules.
Note that several times it holds $Q_i\subseteq Q_j,$ for instance for $(i,j)=(2,10),(7,5),(1,9).$
We treat them separately for the sake of simplicity of \hbox{verification.}\vspace{0cm}
\begin{displaymath}
\begin{array}{l@{:\ }ll}
Q_\mathrm I&\{(x,m+1,z):x\neq 0\}&\xrightarrow{\m} Q_{II} \\
Q_{\mathrm{II}}&\{(1,y+m,z-m):\underline{(y\neq 0\vee z\neq m)}\}&\subseteq Q_{4}\\
Q_1&\{(x,0,m+1):x\neq m\}&\xrightarrow{\p} Q_2\\
Q_2&\{(m,1,z+m):\underline{z\neq 0}\}&\xrightarrow{0000} Q_{3}\\
Q_3&\{(1,y+m,z):y\neq0\}&\xrightarrow{\m} Q_{4}\\
Q_4&\{(x,y+m,z-m):x\neq0, \underline{(y\neq 0\vee z\neq m)}\}&\xrightarrow{\m} Q_{5}\\
Q_5&\{(x,y,z-m):{(x\neq 0\vee y\neq m)},\underline{z\neq m}\}& 
\hspace{-10pt}
\begin{cases}
\xrightarrow{0000}Q_{6}&\text{if }y\neq m\\ 
\xrightarrow{\m}Q_{7}&\text{if }y=m
\end{cases}\\
Q_6&\{(x,y-m,z):x\neq m,{y\neq m}\}&\xrightarrow{\p} Q_{9}\\
Q_7&\{(0,y,z-m):y\neq m, \underline{z\neq m}\}&\xrightarrow{0000} Q_{8}\\
Q_8&\{(x,y-m,z):x\neq m, {y\neq m}\}&\xrightarrow{\p} Q_{9}\\
Q_9&\{(x,y-m,z+m):x\neq m, \underline{(y\neq m \vee z\neq 0)}\}&\xrightarrow{\p} Q_{10}\\
Q_{10}&\{(x,y,z+m): (x\neq m\vee y\neq 0), \underline{z\neq 0}\}&
\hspace{-10pt}
\begin{cases}
\xrightarrow{\p}Q_{11}&\text{if }y=0\\ 
\xrightarrow{\m}Q_{12}&\text{if }y\neq 0
\end{cases}\\
Q_{11}&\{(m,y,z+m):y\neq 0,\underline{z\neq 0}\}&\xrightarrow{0000} Q_{12}\\
Q_{12}&\{(x,y+m,z):x\neq 0,{y\neq 0}\}&\xrightarrow{\m} Q_{13}\\
Q_{13}&\{(x,y+m,z-m):x\neq 0,\underline{(y\neq 0\vee z\neq m)}\}&= Q_4
\end{array}
\end{displaymath}
One can see that the output stays in $A$. It remains to show that the automaton stops for any entry with suffix $0^\omega$. Indeed, there is a unique loop
$Q_4\rightarrow\cdots\rightarrow Q_{10}\rightarrow \cdots\rightarrow Q_{13}=Q_4$, and ultimately, the condition $z\neq0$ in $Q_{10}$ is not fulfilled. We have thus proved the following theorem.
\begin{thm}
  Let $\beta>1$ be the zero of $x^3-mx^2-mx-n,\ m,n\in\N,\ m\geq n\geq 1.$ Then $\Finmb$ is a ring if and only if $n=m.$
\end{thm}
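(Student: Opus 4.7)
The plan is to handle the two directions of the biconditional separately. The ``only if'' direction, namely that for $m>n$ the base $-\beta$ fails Property $(-\mathrm F)$, is already known from \cite{DMV}, so I would just cite that. Hence the substance is the ``if'' direction: for $m=n$, show that $\Finmb = \Z[\beta]$.

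For the forward direction, I would first reduce closure under multiplication to closure under addition: since $(-\beta)^d \Finmb = \Finmb$ for every $d\in\Z$, any element of $\Finmb$ is a finite $\Z$-linear combination of powers $(-\beta)^i$, so it suffices to show $\Finmb$ is closed under addition. Next, reduce closure under addition to two very specific cases: $-1\in\Finmb$ and $x+1\in\Finmb$ for every $x\in\Finmb$. The first holds by direct verification of a representation (the identity $-1\bullet = 1m\bullet 00m$ coming from the minimal polynomial $\beta^3=m\beta^2+m\beta+m$). The only nontrivial situation in the second case is when the least integer digit of $x$ already equals $m$, so that naively $x+1$ has an illegal digit $m+1$ that needs rewriting.

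To handle the $m+1$ case, I would construct a finite-state sliding-window automaton, exactly as done above, that reads the representation of $x+1$ from left to right. Each transition adds a representation of zero coming from the identity $1m\overline m m = \overline 1\overline m m \overline m$ (valid because $\beta$ is a root of $x^3-mx^2-mx-m$) so the numerical value is preserved; the goal of each transition is to push any out-of-alphabet digit rightwards until the window settles back inside $A=\{0,1,\dots,m\}$. Organizing the states into families $Q_{\mathrm I},Q_{\mathrm{II}},Q_1,\dots,Q_{13}$ parametrized by the values $(x,y,z)$ in the window, together with the distinguished starting configurations depending on whether $x_1\geq 1$, $x_2<m\wedge x_1=0$, or the leading pattern is $x_4x_3m0(m+1)$, covers all possible input shapes.

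The main obstacle is proving that the automaton halts on every input with suffix $0^\omega$. For this I would exhibit the unique cycle $Q_4\to Q_5\to\cdots\to Q_{10}\to\cdots\to Q_{13}=Q_4$ and argue that the ``underlined'' stopping condition $z\neq 0$ at state $Q_{10}$ must eventually fail once the input feeds only zeros, since each traversal of the cycle transports a nonzero quantity one position to the right and the input provides no more such quantities. Checking that the output stays in $A$ after each transition is routine from the definitions of the $Q_i$, and I would handle it by a straightforward case inspection rather than spelling out every transition. With termination and correctness of transitions in hand, $x+1\in\Finmb$ follows, completing the proof of the theorem.
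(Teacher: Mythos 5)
Your proposal is correct and follows essentially the same route as the paper: cite \cite{DMV} for the case $m>n$, reduce Property $(-\mathrm F)$ to showing $-1\in\Finmb$ and $x+1\in\Finmb$, and resolve the digit $m+1$ via the sliding-window automaton with states $Q_{\mathrm I},Q_{\mathrm{II}},Q_1,\dots,Q_{13}$, terminating because the unique cycle $Q_4\to\cdots\to Q_{13}=Q_4$ eventually violates the condition $z\neq 0$ at $Q_{10}$. No substantive differences to report.
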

\section{Conclusions}
We extended a class of numbers that are known to possess Property (-F). It seems that this property is satisfied also by the zeros of~\eqref{baze} of arbitrary odd degree, in case that $m=n.$ An approach described in Section~\ref{sekcepet} might lead to obtaining such result. It would be nice to find a more general class satisfying Property (-F), somehow comparable for instance with the bases given in \cite{FRSO} or \cite{HOL}.

\section*{Acknowledgements}
The author would like to thank to Zuzana Mas\'akov\'a for many useful comments.
This work was supported by the Grant Agency of the Czech Technical University in Prague, grant No. SGS14/205/OHK4/3T/14 and Czech Science Foundation grant 13-03538S.

\end{document}